\newtheorem*{theorem}{Theorem}
\newcommand{\cc}{\mathbb C}
\newcommand{\co}{\cc^*}
\newcommand{\zz}{\mathbb Z}
\newcommand{\cone}{\operatorname{cone}}
\renewcommand{\lim}{\operatorname{lim}}
\newcommand{\T}{\mathbb T}
\newcommand{\spec}{\operatorname{Spec}}
\newcommand{\rr}{\mathbb R}
\begin{document}
\baselineskip=17pt

\title[A categorical quotient]%
{A categorical quotient in the category of dense constructible subsets}
\author[D.~Celik]{Devrim Celik (T\"ubingen)} 
\address{Mathematisches Institut, Universit\"at T\"ubingen,
Auf der Morgenstelle 10, 72076 T\"ubingen, Germany}
\email{celik@mail.mathematik.uni-tuebingen.de}
\subjclass[2000]{14L30, 14L24}
\keywords{Algebraic torus action, categorical quotient,
dense constructible subsets}
\maketitle

\begin{abstract}
A.~A'Campo-Neuen and J.~Hausen gave an example of an
algebraic torus action on an open subset of the affine
four space that admits no quotient in the category of
algebraic varieties. We show that this example admits
a quotient in the category of dense constructible subsets
and thereby answer a question of A.~Bia\l ynicki-Birula.
\end{abstract}

The purpose of this note is to 
answer a question by A.~Bia\l ynicki-Birula 
on categorical quotients in the category 
of dense constructible subsets
(dc-subsets) introduced in~\cite{BB}. 
The objects of this  category are pairs 
$X \subseteq X'$, 
where $X'$ is a complex algebraic variety 
and $X$ is a dense constructible subset
of $X'$.
A morphism from $X \subseteq X'$ to 
$Y \subseteq Y'$ is a map $X \to Y$ that 
extends to a morphism $X_1' \to Y'$ for 
some open neighbourhood 
$X_1' \subseteq X'$ of $X$.
Consider the $\cc^*$-variety
$$
X
\quad := \quad
\cc^{2}\times (\co)^{2}
\ \cup \ 
(\co)^{2}\times \cc^{2},
\qquad
tx
\quad := \quad 
(tx_{1},tx_{2},x_{3},t^{-1}x_{3}).
$$
A categorical quotient for this 
$\cc^*$-variety is a $\co$-invariant
morphism 
$\pi \colon X \to Y$, i.e. a morphism
being constant along the $\co$-orbits,
such that any other $\co$-invariant
morphism $\varphi \colon X \to Z$ 
has a unique factorization 
$\varphi = \psi \circ \pi$ with a 
morphism $\psi \colon Y \to Z$.
This concept depends strongly on 
the category one works with.
For example, in the category of 
complex algebraic 
varieties, there is no such quotient
for $X$, 
see~\cite[Prop. 5.1 (ii)]{ACHa2}, whereas in the category
of toric varieties, $X$ admits a 
quotient, namely the toric morphism
$$
\pi \colon X \ \to \ \cc^3,
\qquad 
(x_{1},x_{2},x_{3},x_{4}) 
\ \mapsto \
(x_{1}x_{4},x_{2}x_{4},x_{3}),
$$
use~\cite[Thm. 2.3]{ACHa1}.
Now, set $Y := \pi(X)$.
Then $Y \subseteq \cc^3$ is a proper dc-subset, 
and $\pi$ defines a morphism of dc-subsets
from $X \subseteq X$ to $Y \subseteq \cc^3$.
A.~Bia\l ynicki-Birula asks~\cite[p.~53]{BB},
whether or not this is a categorical quotient
in the category of dc-subsets.

\begin{theorem}
The map $\pi\colon X\rightarrow Y$ 
is a categorical quotient for the $\co$-action 
on $X$ in the category of dc-subsets. 
\end{theorem}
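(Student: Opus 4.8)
The plan is to verify the universal property directly. Since here $X'=X$, a morphism of dc-subsets $\varphi\colon X\to Z$ into an object $Z\subseteq Z'$ is nothing but a morphism of varieties $\varphi\colon X\to Z'$ with $\varphi(X)\subseteq Z$, and $\co$-invariance means that $\varphi$ is constant along the orbits. I first dispose of uniqueness: as $\pi$ maps $X$ onto $Y$, any factorization forces $\psi(y)=\varphi(x)$ for $x\in\pi^{-1}(y)$, and since a morphism of dc-subsets is determined by its values on the dc-subset, $\psi$ is unique. For existence I must show that this formula is well defined, i.e. that $\varphi$ is constant on the fibres of $\pi$, and that the resulting map $\psi\colon Y\to Z$ is a morphism of dc-subsets.

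The fibres of $\pi$ over points with $y_1\neq 0$ or $y_2\neq 0$ are single $\co$-orbits, so there invariance of $\varphi$ already gives constancy. The interesting fibres lie over the line $\{y_1=y_2=0\}$: over $(0,0,y_3)$ with $y_3\neq 0$ the fibre consists of the orbit $\{(0,0,y_3,x_4):x_4\in\co\}$ coming from the first piece together with the orbits $\{(x_1,x_2,y_3,0):x_1,x_2\in\co\}$ coming from the second piece, while over the origin it is the two-dimensional set $\{(x_1,x_2,0,0):x_1,x_2\in\co\}$. To see that $\varphi$ is constant here I use a limit argument. For $a,b,c\in\co$, $y_3\in\co$ and $s\in\co$, invariance gives $\varphi(a,b,y_3,s)=\varphi(ta,tb,y_3,t^{-1}s)$ for every $t\in\co$; specializing to $t=s/c$ yields
$$\varphi(a,b,y_3,s)\;=\;\varphi\bigl(\tfrac{s}{c}a,\ \tfrac{s}{c}b,\ y_3,\ c\bigr).$$
Letting $s\to 0$ and using that $\varphi$ is continuous, the left-hand point tends to $(a,b,y_3,0)\in X$ and the right-hand point to $(0,0,y_3,c)\in X$, whence $\varphi(a,b,y_3,0)=\varphi(0,0,y_3,c)$. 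This proves constancy on the fibre over $(0,0,y_3)$ and shows in addition that $\varphi(a,b,y_3,0)$ does not depend on $(a,b)$; letting now $y_3\to 0$ along the line $\{(a,b,y_3,0)\}\subseteq X$ and comparing two choices $(a,b)$ and $(a',b')$ gives $\varphi(a,b,0,0)=\varphi(a',b',0,0)$. Hence $\varphi$ is constant on fibres and $\psi\colon Y\to Z$ is a well-defined map with $\psi(Y)\subseteq Z$.

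It remains to see that $\psi$ is a morphism of dc-subsets. Over each of the open sets $\{y_1\neq0\}$, $\{y_2\neq0\}$ and $\{y_3\neq0\}$ the map $\pi$ admits an explicit section into $X$; for instance $\sigma(y)=(y_1,y_2,y_3,1)$ lands in the first piece over $\{y_3\neq0\}$ and in $X$ over $\{y_1\neq0\}\setminus\{y_2=y_3=0\}$, with $\pi\circ\sigma=\mathrm{id}$. Composing $\varphi$ with these sections exhibits $\psi$ as a morphism of varieties on each set, and because all of them restrict to $\psi$ on the overlaps and $Z'$ is separated, they glue to a morphism defined on $Y\setminus\{(0,0,0)\}=\cc^3\setminus(L_1\cup L_2)$, where $L_1,L_2$ are the two coordinate axes lying in $\{y_3=0\}$.

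The main obstacle is to extend this morphism across the origin, so as to obtain a morphism on an open neighbourhood of $Y$ in $\cc^3$. This is precisely the point where no quotient exists in the category of varieties: over the origin the whole two-dimensional family of closed orbits $\{(x_1,x_2,0,0)\}$ is contracted, so $\pi$ has no local section there and $\psi$ cannot be written as $\varphi\circ\sigma$. To get past this I would choose an affine open $Z_0'\subseteq Z'$ containing $\psi(0,0,0)$; since $\varphi$ is constant on fibres, $\varphi^{-1}(Z_0')$ is $\pi$-saturated, and the coordinate functions of $\psi$ near the origin are invariant regular functions. Using the computation $\mathcal O(X)^{\co}=\cc[x_1x_4,x_2x_4,x_3]=\pi^{*}\cc[\cc^3]$, these descend to regular functions on $\cc^3\setminus(L_1\cup L_2)$; as $L_1\cup L_2$ has codimension two in the normal variety $\cc^3$, algebraic Hartogs extends them across the axes and the origin, while continuity keeps the extended value in $Z_0'$. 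This provides the required extension to a neighbourhood of $Y$ and completes the factorization $\varphi=\psi\circ\pi$.
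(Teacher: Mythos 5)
Your reduction to well-definedness, the limit computation with $t=s/c$ showing that $\varphi$ is constant on the fibres of $\pi$, and the use of the explicit section $\sigma(y)=(y_1,y_2,y_3,1)$ to make $\psi$ a morphism on $V:=\cc^3\setminus(L_1\cup L_2)=Y\setminus\{0\}$ are all correct, and they are genuinely more elementary than the corresponding steps of the paper, which instead invokes the prevariety quotient $\widetilde{\pi}\colon X\to\widetilde{Y}$ and computes the fibres of $\kappa\colon\widetilde{Y}\to\cc^3$ toric-geometrically. Up to that point nothing is missing.

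The gap is in your final paragraph, and it sits exactly at the crux of the problem. To apply the codimension-two Hartogs principle at the origin you would need the coordinate functions $g_1,\dots,g_r$ of $\psi$ (relative to an embedding $Z_0'\subseteq\cc^r$) to be regular on $V'\setminus(L_1\cup L_2)$ for some open neighbourhood $V'$ of the origin in $\cc^3$. What you actually know is that they are regular on $V\cap V_0$, where $V_0=\psi^{-1}(Z_0')$; off $V_0$ the map $\psi$ does not land in the chart $Z_0'$ and the $g_i$ are not defined there at all. So the claim that the $g_i$ ``descend to regular functions on $\cc^3\setminus(L_1\cup L_2)$'' is unjustified: the identity $\mathcal{O}(X)^{\co}=\cc[x_1x_4,x_2x_4,x_3]$ concerns global invariants, whereas $g_i\circ\pi$ is only defined on the open invariant subset $\varphi^{-1}(Z_0')$. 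What must be excluded is that $V\setminus V_0$ contains a prime divisor through the origin, i.e.\ that a polar divisor of some $g_i$ passes through $0$; equivalently, one must show that every prime divisor $D\subseteq\cc^3$ with $0\in D$ meets $V\cap V_0$. This is precisely what the last two paragraphs of the paper's proof are devoted to (using that $\kappa^{-1}(D)$ has pure codimension one in $\widetilde{Y}$). The omission is repairable within your set-up: $\varphi^{-1}(Z_0')$ is open in $X$ and contains the entire fibre $\pi^{-1}(0)=\{(a,b,0,0):a,b\in\co\}$; for $D=V(f)$ through the origin, $\pi^{-1}(D)=V(f\circ\pi)\cap X$ is nonempty of pure codimension one, so no component of it lies in the codimension-two set $\pi^{-1}(0)=X\setminus\pi^{-1}(V)$; hence $\pi^{-1}(D)\cap\pi^{-1}(V)$ is dense in $\pi^{-1}(D)$ and meets the open set $\varphi^{-1}(Z_0')$, and the image of any such point lies in $D\cap V\cap V_0$. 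With that lemma supplied, your conclusion that each $g_i$ is regular at the origin is correct; as written, the proof skips the one step that separates the dc-quotient from the nonexistent variety quotient.
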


\begin{proof}
The most convenient way is to treat 
the problem in the setting of the 
toric varieties; we work in the notation of \cite{Fu}. 
Note that $X$ arises from the fan 
$\Delta$ in $\zz^{4}$ that has 
$\sigma_{1}:=\cone(e_{1},e_{2})$ 
and $\sigma_{2}:=\cone(e_{3},e_{4})$ 
as its maximal cones. 
Let $P\colon \zz^{4}\rightarrow \zz^{3}$ 
denote the homomorphism defined by 
\[
 P(e_{1}) \ := \ e_{1},
\qquad 
P(e_{2})\ := \ e_{2}, 
\qquad 
P(e_{3})\ := \ e_{3}, 
\qquad 
P(e_{4}) \ := \ e_{1}+e_{2}.
\]
Moreover, set $\tau_{i}:= P(\sigma_{i})$, 
$\rho_{i} := \cone(e_{i})$, where $i=1,2,3$,
and $\rho_{4}:=\cone(e_{1}+e_{2})$.
Then these cones are located in $\rr^3$ 
as indicated in the following figure.
\begin{center}
\begin{picture}(0,0)%
\includegraphics{cones.pstex}%
\end{picture}%
\setlength{\unitlength}{4144sp}%
\begingroup\makeatletter\ifx\SetFigFontNFSS\undefined%
\gdef\SetFigFontNFSS#1#2#3#4#5{%
  \reset@font\fontsize{#1}{#2pt}%
  \fontfamily{#3}\fontseries{#4}\fontshape{#5}%
  \selectfont}%
\fi\endgroup%
\begin{picture}(1837,1712)(3269,-3790)
\put(4624,-3296){\makebox(0,0)[lb]{\smash{{\SetFigFontNFSS{7}{8.4}{\rmdefault}{\mddefault}{\updefault}{\color[rgb]{0,0,0}$\rho_4$}%
}}}}
\put(5091,-2962){\makebox(0,0)[lb]{\smash{{\SetFigFontNFSS{7}{8.4}{\rmdefault}{\mddefault}{\updefault}{\color[rgb]{0,0,0}$\rho_2$}%
}}}}
\put(3874,-3744){\makebox(0,0)[lb]{\smash{{\SetFigFontNFSS{7}{8.4}{\rmdefault}{\mddefault}{\updefault}{\color[rgb]{0,0,0}$\rho_1$}%
}}}}
\put(3466,-3340){\makebox(0,0)[lb]{\smash{{\SetFigFontNFSS{7}{8.4}{\rmdefault}{\mddefault}{\updefault}{\color[rgb]{0,0,0}$\tau_1$}%
}}}}
\put(3284,-2177){\makebox(0,0)[lb]{\smash{{\SetFigFontNFSS{7}{8.4}{\rmdefault}{\mddefault}{\updefault}{\color[rgb]{0,0,0}$\rho_3$}%
}}}}
\put(3928,-2618){\makebox(0,0)[lb]{\smash{{\SetFigFontNFSS{7}{8.4}{\rmdefault}{\mddefault}{\updefault}{\color[rgb]{0,0,0}$\tau_2$}%
}}}}
\end{picture}%

\end{center}
Note that $\pi\colon X\rightarrow \cc^{3}$ is the toric 
morphism defined by the homomophism $P \colon \zz^4 \to \zz^3$, 
and that the image $Y = \pi(X)$ is given by
\begin{eqnarray*}
Y
&= & 
\cc^{3}
\ \backslash \ 
(\{0\}\times \co \times \{0 \}
\ \cup \ 
\co \times \{0 \} \times \{0 \})
\\
& = & 
\T y_{0}
\ \cup \ 
\bigcup_{i=1}^{3} \T y_{\rho_{i}} 
\ \cup \ 
\T y_{\tau_{1}} 
\ \cup \ 
\T y_{\delta}
\end{eqnarray*}
where $y_0$, $y_{\rho_{i}},$ etc. in $\cc^{3}$ denote the distinguished points 
corresponding to the faces $0$, $\rho_i$ 
etc. of the cone $\delta := 
\cone(e_{1},e_{2},e_{3})$ describing 
the toric variety $\cc^{3}$.

Moreover, let $\widetilde{Y}$ denote the 
toric prevariety obtained by gluing 
the affine toric varieties 
$\widetilde{Y}_{\tau_{i}}:=\spec(\cc[\tau_{i}^{\vee}\cap \zz^{3}])$, 
where $i=1,2$, along the big torus 
$\T:=(\co)^{3} \subseteq \widetilde{Y}_{\tau_{i}}$.
Then $P \colon \zz^4 \to \zz^3$ defines 
a $\co$-invariant toric morphism
$\widetilde{\pi} \colon X \to \widetilde{Y}$.
Finally, the lattice homorphism 
$\operatorname{id}\colon \zz^{3}\rightarrow \zz^{3}$
gives us a toric morphism
$\kappa \colon \widetilde{Y} \rightarrow \cc^{3}$ 
with $\kappa(\widetilde{Y})=Y$ and 
$\pi=\kappa \circ \widetilde{\pi }$.

In order to verify the universal property 
of the morphism $\pi \colon X \to Y$, let 
$Z\subseteq Z'$ be a dc-subset and 
$\varphi \colon X \to Z$ a $\co$-invariant
morphism.
According to \cite[Prop.~5.1 (i)]{ACHa2}, 
the toric morphism 
$\widetilde{\pi} \colon X \to \widetilde{Y}$ 
is a categorical quotient for the 
$\co$-action on $X$ in the category of 
complex prevarieties.
Consequently, we have 
$\varphi= \widetilde{\psi} \circ \widetilde{\pi}$
with a morphism 
$\widetilde{\psi}\colon \widetilde{Y} \rightarrow Z'$. 
Our task is to show that there is a 
morphism $\psi \colon Y \to Z$ of dc-subsets 
making the following diagram commutative
\[
\xymatrix{
X
\ar[rr]^\varphi 
\ar[d]_{\widetilde{\pi}} 
\ar[dr]^\pi
&&
Z
\ar@{}[r]|\subseteq 
&
Z' 
\\
{\widetilde{Y}}
\ar[r]_\kappa 
\ar@/_3pc/[urrr]^{\widetilde{\psi}} 
&Y
\ar[ur]_{\psi}
&&
}
\]

In a first step, we show that such a $\psi$ 
exists as a set theoretical map. 
Since we have $\widetilde{\psi}(\widetilde{Y})=
\varphi(X)\subseteq Z$, this amounts 
to verifying that $\widetilde{\psi}$ 
is constant along the fibers of $\kappa$.
Let $\widetilde{y}_{0},\widetilde{y}_{\rho_{1}},
\widetilde{y}_{\rho_{2}},
\widetilde{y}_{\tau_{1}}\in \widetilde{Y}_{\tau_{1}}$ 
and $\widetilde{y}_{0},\widetilde{y}_{\rho_{3}},
\widetilde{y}_{\rho_{4}},
\widetilde{y}_{\tau_{2}}\in \widetilde{Y}_{\tau_{2}}$ 
denote the distinguished points corresponding 
to the cones $0$, $\rho_{i}$ etc.. 
Further denote by $\T_{y_{\tau_{i}}}$ 
the isotropy groups of $y_{\tau_{i}}$.
Then, using e.g.~\cite[Prop. 3.5]{ACHa3},
we obtain that the $\kappa$-fibres are
$$
\kappa^{-1}(ty_{0}) \ = \ t\widetilde{y}_{0}, 
\qquad
\kappa^{-1}(ty_{\rho_{i}}) \ = \ t\widetilde{y}_{\rho_{i}},
\quad 
i=1,2,3,
$$
$$
\kappa^{-1}(ty_{\tau_{1}})
\ = \ 
t\T_{y_{\tau_{1}}}\widetilde{y}_{\tau_{1}}
\cup 
t\T_{y_{\tau_{1}}}\widetilde{y}_{\rho_{4}},
\qquad
\kappa^{-1}(y_{\delta})
\ = \ 
\T \widetilde{y}_{\tau_{2}},
$$
where $t\in \T$. 
Thus, only for $\kappa^{-1}(ty_{\tau_{1}})$
and $\kappa^{-1}(y_{\delta})$, there is something 
to show.
Take $v \in \zz^{3}$ from the relative interior 
$\rho_4$ and let $\lambda_{v}\colon \co \to \T$ 
denote the corresponding one-parameter-subgroup.
Then, in $\widetilde{Y}$, we have
\begin{eqnarray*}
\overline{\lambda_{v}(\co) t \widetilde{y_{0}}}
\ \backslash \ 
\lambda_{v}(\co) t \widetilde{y_{0}}
& = & 
\{
t \widetilde{y}_{\tau_{1}}, \; t \widetilde{y}_{\rho_{4}}
\}.
\end{eqnarray*}
Note that we have
$t \widetilde{y}_{\tau_{1}}\in \widetilde{Y}_{\tau_{1}}$ 
and
$t \widetilde{y}_{\rho_{4}} \in \widetilde{Y}_{\tau_{2}}$. 
Since $Z'$ separated, we obtain
$\widetilde{\psi}(t\widetilde{y}_{\tau_{1}})=
\widetilde{\psi}(t\widetilde{y}_{\rho_{4}})$.
In particular, we obtain for 
$t\in \T_{y_{\tau_{1}}}=\T_{\widetilde{y}_{\tau_{1}}}$ 
the equation 
\[
\widetilde{\psi}(t\widetilde{y}_{\rho_{4}})
\ = \ 
\widetilde{\psi}(t\widetilde{y}_{\tau_{1}})
\ = \ 
\widetilde{\psi}(\widetilde{y}_{\tau_{1}}).
\]
Consequently, for every $t\in \T$, the set 
$\widetilde{\psi}(\kappa^{-1}(t y_{\tau_{1}}))$ 
consists of a single point.
Now we treat $\kappa^{-1}(t y_{\tau_{1}})=\T 
\widetilde{y}_{\tau_{2}}$. First note that 
$\T=\T_{\widetilde{y}_{\tau_{1}}}\T_{\widetilde{y}_{\tau_{2}}}$ 
and hence any $t\in \T$ 
can be written as  $t=t_{\tau_{1}}t_{\tau_{2}}$ 
with $t_{\tau_{i}} \in \T_{\widetilde{y}_{\tau_{i}}}$. 
Consider again the one parameter subgroup
$\lambda_{v} \colon \co \to \T$ corresponding 
to a lattice vector $v \in \zz^3$ in the 
relative interior of $\rho_4$.
Then we have:
\begin{eqnarray*}
\widetilde{\psi}(t\widetilde{y}_{\tau_{2}})
&= &
% \widetilde{\psi}(t_{\tau_{1}}t_{\tau_{2}} 
%\lim_{s\rightarrow 0}(\lambda_{v}(s)\widetilde{y}_{\rho_{4}}))\\
\lim_{s\rightarrow 0}\widetilde{\psi}( t_{\tau_{1}}t_{\tau_{2}} 
\lambda_{v}(s)
\widetilde{y}_{\rho_{4}}) 
\\
&=&
\lim_{s\rightarrow 0}\widetilde{\psi}( t_{\tau_{1}}t_{\tau_{2}} 
\lambda_{v}(s)\widetilde{y}_{\tau_{1}}) 
\\
&=&
\lim_{s\rightarrow 0}\widetilde{\psi}( t_{\tau_{2}} 
\lambda_{v}(s)\widetilde{y}_{\tau_{1}})
\\
% &=\widetilde{\psi}(t_{\tau_{2}} \lim_{s\rightarrow 0}
%(\lambda_{v}(s)\widetilde{y}_{\rho_{4}}))\\
&=&
\widetilde{\psi}(t_{\tau_{2}}\widetilde{y}_{\tau_{2}})
\\
& = &
\widetilde{\psi}(\widetilde{y}_{\tau_{2}}).
\end{eqnarray*}

We show now that the map $\psi \colon Y \to Z$ 
is a morphism of dc-subsets.
Consider the open affine toric subvarieties 
$Y_{\rho_{3}}, Y_{\tau_{1}} \subseteq \cc^{3}$ 
and $\widetilde{Y}_{\rho_{3}} \subseteq \widetilde{Y}$ 
defined by the cones $\rho_{3}$ and $\tau_{1}$. 
Then we have $Y_{\tau_{1}},Y_{\rho_{3}}\subseteq Y$ 
and, for  $U:=\widetilde{Y}_{\tau_{1}}\cup 
\widetilde{Y}_{\rho_{3}}$ and $V:=Y_{\tau_{1}}\cup Y_{\rho_{3}}$, 
the restriction $\kappa_{\vert U} \colon U\rightarrow V$ 
is an isomorphism of varieties.
Thus, we have  
$\psi=\widetilde{\psi} \circ \kappa^{-1}$ on $V$, 
and hence $\psi$ is a rational map. 

We have to show that $\psi$ defined  
near the point $y_{\delta}$. 
For this we choose an affine open 
neighbourhood $W_{0}\subseteq Z'$ 
of $\psi(y_{\delta})$ in $Z'$ 
and set $U_{0}:= \widetilde{\psi}^{-1}(W_{0})$ 
and $V_{0}:= \psi^{-1}(W_{0})$. 
Then we have an open subset 
$$
V_{0} \cap V
\ = \ 
\kappa (U_{0}\cap U) 
\ \subseteq \ 
\cc^{3}, 
$$
on which $\psi$ is a morphism.
After realizing $W_0$ as a closed subset
of some ${\mathbb K}^r$, we obtain functions 
$g_1, \ldots, g_r \in \mathcal{O}(V_{0}\cap V),$
such that the morphism $\psi$ is given on
$V_0 \cap V$ as 
\[
\psi_{\vert V_0 \cap V}
\colon
V_0 \cap V \ \to \ W_0,
\qquad
y \ \mapsto \ (g_1(y), \ldots, g_r(y)).
\] 

Suppose for the moment that each $g_i$ is defined 
at $y_{\delta}$.
Then there are an open set 
$V' \subseteq \cc^{3}$ containing $V_0 \cap V$
and the point $y_{\delta}$ such that 
$g_1, \ldots, g_r$ define a morphism
$\psi' \colon V' \to W_0$ extending 
$\psi_{\vert V_0 \cap V}$.
This morphism fits into the diagram
\[ 
\xymatrix{
U_0 \cap U
\ar@{}[r]|\subseteq 
\ar[d]_{\kappa}
&
{\kappa^{-1}(V')}
\ar[r]^{\widetilde{\psi}}
\ar[d]_{\kappa}
&
W_0
\\
V_0 \cap V
\ar@{}[r]|\subseteq 
&
V'
\ar[ur]_{\psi'}
& 
}
\]
Since $\widetilde{\psi}$ and $\psi' \circ \kappa$ coincide 
on the dense open subset $U_{0} \cap U$, 
they coincide on $\kappa^{-1}(V')$.
Thus, by surjectivity of $\kappa$, we obtain
$\psi' = \psi_{\vert V'}$ and see that $\psi$ 
is a morphism on the neighbourhood $V' \subseteq \cc^{3}$ 
of $y_{\delta} \in Y$. Hence, $\psi$ is a morphism
on the open set $V\cup V'\subseteq \cc^{3}$. 
This shows that $\psi \colon Y \rightarrow Z$
is a mophism in the category of dc-subsets.

Thus, our task is to show that every $g_i$ 
is defined in $y_{\delta}$.
For this, we regard $g_i$ as a rational function on 
the normal ambient variety $\cc^{3}$ of $Y$ 
and show that $\operatorname{div}(g_i)$ is nonnegative
at $y_{\delta}$.
For this in turn, it suffices to show that any prime 
divisor on $\cc^{3}$ containing $y_{\delta}$ meets the 
open set $V \cap V_0$.

Let $D$ be a prime divisor on $\cc^{3}$ passing through 
$y_{\delta}$. 
We show that $D$ meets the open set 
$V \cap V_0$. Since $\cc^{3}\setminus V$ is of 
codimension 2 in $\cc^{3}$, we obtain 
$D\cap V \ne \emptyset$. First consider the case 
that $\kappa^{-1}(D)$ contains the prime divisor 
$\overline{\T\widetilde{y}_{\rho_{4}}}
 = \T\widetilde{y}_{\rho_{4}}\cup \T \widetilde{y}_{\tau_{2}}$. 
Then $\kappa^{-1}(D)\cap U_{0}\ne \emptyset$, hence 
\begin{eqnarray*} 
D \cap V_{0} 
\ \ne \ 
\emptyset
& \implies & 
D \cap V \cap  V_0 
\ \ne \ 
\emptyset.
\end{eqnarray*}
Next suppose that $\kappa^{-1}(D)$ 
doesn't contain $\overline{\T 
\widetilde{y}_{\rho_{4}}}=
 \widetilde{Y}\setminus U$.
Then any component of $\kappa^{-1}(D)$
meets $U$. Since $\cc^{3}$ is factorial, 
$D$ is principal, and thus 
$\kappa^{-1}(D)$ is of pure codimension
one in $\widetilde{Y}$.
Consequently, $\kappa^{-1}(D)$ equals the closure of 
$\kappa^{-1}(D) \cap U$ in $\widetilde{Y}$. 
Since $\kappa^{-1}(D)$ intersects $\kappa^{-1}(y_{\delta})$, 
we can conclude
\begin{eqnarray*} 
\kappa^{-1}(D) \cap U \cap  U_0 
\ \ne \ 
\emptyset
& \implies & 
D \cap V \cap  V_0 
\ \ne \ 
\emptyset.
\end{eqnarray*}
\end{proof}

\end{document}